\documentclass[11pt]{article}
\usepackage{amsmath,amsthm,amssymb,amsxtra, graphicx}
\usepackage{amsmath}
\usepackage{graphicx}
\usepackage{amsfonts}
\usepackage{amssymb}
\usepackage{color}

\voffset-2.0 cm \hoffset -2.0 cm \textwidth 16.7 cm \textheight
23.5 cm \thispagestyle{empty}
\date{}
\newtheorem{theorem}{Theorem}[section]
\newtheorem{lemma}{Lemma}[section]
\newtheorem{proposition}{Proposition}[section]
\newtheorem{corollary}{Corollary}[section]

\title{\bf  $L^{p}$-norms and Mahler's
measure of polynomials \\on the $n$-dimensional torus}
\author{Andreas Defant \,and\, Mieczys{\l}aw Masty{\l}o}
\begin{document}
\maketitle
\renewcommand{\thefootnote}{\fnsymbol{footnote}}
\footnotetext{2010 \emph{Mathematics Subject Classification}:
Primary 11R06, 11C08.} \footnotetext {\emph{Key words and
phrases}: Mahler measure, polynomials, Khintchine-Kahane type
inequality.}

\footnotetext{The work was supported by The Foundation for Polish
Science (FNP).}

\maketitle

\vspace{5 mm}

\begin{abstract}
\noindent We prove Nikol'skii type inequalities which for
polynomials on the $n$-dimensional torus $\mathbb{T}^n$ \linebreak
relate the $L^p$-with the $L^q$-norm (with respect to the
normalized Lebesgue measure and $0 <p <q < \infty$).~Among other
things we show that $C=\sqrt{q/p}$ is the best constant such that
$\|P\|_{L^q}\leq C^{\text{deg}(P)} \|P\|_{L^p}$ for all
homogeneous polynomials $P$ on $\mathbb{T}^n$. We also prove an
exact inequality between the $L^p$-norm of a polynomial $P$ on
$\mathbb{T}^n$ and its Mahler measure $M(P)$, which is the
geometric mean of $|P|$ with respect to the normalized Lebesgue
measure on $\mathbb{T}^n$.~Using extrapolation we transfer this
estimate into a~Khintchine-Kahane type inequality, which, for
polynomials on $\mathbb{T}^n$, relates a~certain exponential
Orlicz norm and Mahler's measure. Applications are given,
including some interpolation estimates.
\end{abstract}

\vspace{3 mm}

\section{Introduction}
The Mahler measure of a~polynomial $P$
on $\mathbb{C}^n$ is given by
\begin{align} \label{limit}
M(P):= \exp \int_{\mathbb{T}^n} \log |P|\,dz =\lim_{p\to 0+}
\bigg(\int_{\mathbb{T}^n} |P|^p\,dz \bigg)^{1/p}\,,
\end{align}
where $dz = dz_1 \ldots dz_n$ stands for the normalized Lebesgue
measure on the $n$-torus $\mathbb{T}^n$. Thus $M(P)$ is the
geometric mean of $P$ over the $n$-torus $\mathbb{T}^n$ (we define
$M(0)=0$). We point out that Mahler \cite{Mahler0} used the
functional $M$ as a~powerful tool in a~simple proof of the
``Gelfond-Mahler inequality", which found important applications
in transcendence theory. It seems that the Mahler measure for
polynomials in one complex variable appears the first time in
Lehmer \cite{Lehmer}, where it is proved that, if
$P(z)=\sum_{k=0}^m a_k z^k\,, \, z \in \mathbb{C}$ with  $a_m\neq
0$  and zeros $\alpha_1, \ldots, \alpha_m \in \mathbb{C}$, then
\begin{align} \label{Lehmer}
M(P)= |a_m|\prod_{|\alpha_i| \ge 1} |\alpha_i|\,.
\end{align}
Let us also recall the following crucial multiplicativity property
of the Mahler measure $M$: For two polynomials $P$ and $Q$ on
$\mathbb{C}^n$,
\begin{align}
\label{multiplicativity} M(PQ) = M(P) M(Q)\,.
\end{align}
The following inequality due to  Arestov \cite{Arestov} is a~key
result for this article (first results in this direction are due
Mahler \cite{Mahler0} and Duncan
\cite{Duncan}): For every polynomial $P$ in one complex
variable and of degree $\text{deg}(P)\leq m$, and every $0 < p <
\infty$, we have
\begin{align}\label{Arestov}
\|P\|_{L^p(\mathbb{T})}  \leq \Lambda(p, m)\,M(P)\,,
\end{align}
where
\begin{align}\label{Arestov-con}
\Lambda(p, m) := \Big(\int_{\mathbb{T}} |1+z|^{mp}\,
dz)\Big)^{1/p}=2^{m} \pi^{-1/2p} \bigg(\frac{\Gamma(2^{-1}(mp +
1)}{\Gamma(2^{-1}(mp + 2)}\bigg)^{1/p}.
\end{align}
By definition, we have that $\|P\|_{L^p(\mathbb{T})} = \Lambda(p,
m)$ for the polynomial $P(z)= (1+z)^m\,, \, z \in \mathbb{C}$, and
moreover, by \eqref{Lehmer}, that $M(P)=1$, which implies that
\eqref{Arestov} is sharp. Arestov  proved in \cite{Arestov}, using
asymptotic formulas for the $\Gamma$-function, that for fixed
$p>0$,
\begin{align*}
\Lambda(p, m) = \bigg(\frac{2}{\pi p}\bigg)^{1/2p} 2^m
\big(m^{-1/2p} + o(m^{-1/2p})\big), \quad\, \text{as\, $m\to
\infty$}.
\end{align*}
Throughout the paper we will use standard notation; some
non-standard notation will be given locally.~Let
$\mathcal{P}(\mathbb{K}^n)$ be the space of all polynomials $P$ on
$\mathbb{K}^n$, where $\mathbb{K}=\mathbb{R}$ or
$\mathbb{K}=\mathbb{C}$, i.e.,
\[
P(z)=\sum_{\alpha} c_{\alpha}(P) z^{\alpha}\,,\quad
z=(z_1,...,z_n)\in \mathbb{K}^n\,.
\]
Here the sum is taken  over finitely many multi indices $\alpha
=(\alpha_1, \ldots, \alpha_n) \in \mathbb{N}_0^n$  and $z^{\alpha}
:= z_1^{\alpha_1}\,...\,z_n^{\alpha_n}$  denotes  the $\alpha$th
monomial. As usual, $ |\alpha|= \sum_{j=1}^n \alpha_j\,, $ and we
call
\begin{align}  \label{degreeone}
\text{deg}(P) := \max \{|\alpha|\,;\, c_\alpha(P)\neq 0\}
\end{align}
the total degree of $P$. If $m=\text{deg}(P)$ and all monomial
coefficients $c_\alpha=c_\alpha(P) =0 $ for all $|\alpha| < m$,
then $P$ is said to be $m$-homogeneous. For every polynomial $P\in
\mathcal{P}(\mathbb{K}^n)$ and $(u_1,...,u_n) \in \mathbb{K}^n$
the degree of the one variable polynomial
\begin{align} \label{1variable}
P_j(\cdot)=P(u_1, \ldots, u_{j-1},\,\cdot\,, u_{j+1}, \ldots, u_{n})
\end{align}
equals
\begin{align}\label{degree}
\text{deg}(P_j):=\max \big\{\alpha_j\,; \, c_\alpha(P) \neq 0
\big\} \,.
\end{align}
Finally, in contrast to \eqref{degreeone}, we write
\begin{align}  \label{degreeseveral}
\text{deg}_\infty(P) := \max \{\text{deg}(P_j)\,;\, 1 \leq j \leq
n\}\,.
\end{align}
The following  extension of \eqref{Arestov}, a sharp
Khintchine-Kahane type inequality which allows one to estimate the
$L^p(\mathbb{T}^n)$-norm $\|P\|_{L^p(\mathbb{T}^n)}$ of $P\in
\mathcal{P}(\mathbb{C}^n)$ by its  Mahler measure $M(P)$, was the
starting point of this article, and will be proved in Section
\ref{Mahler}: For every $P\in \mathcal{P}(\mathbb{C}^n)$ and every
$0 < p < \infty$,
\begin{equation*}
\|P\|_{L^p(\mathbb{T}^n)} \leq \bigg(\prod_{j=1}^{n} \Lambda \big(p,
\text{deg}(P_j)\big)\bigg) M(P)\,.
\end{equation*}
In view of \eqref{limit} this inequality can be seen as a limit
case of Khintchine-Kahane type inequalities relating for $0 <p <q
< \infty$ the $L^q{(\mathbb{T}^n)}$-norm of $P\in
\mathcal{P}(\mathbb{C}^n)$ by its $L^p{(\mathbb{T}^n)}$-norm. The
study of $L^p$-norms of polynomials has a~rich history, and in
Section \ref{qversusp} we give new information in this direction.
Recently, there has been a~considerable  interest in the behaviour
of the constants of Khintchine-Kahane type equivalences
$\|\cdot\|_{L^p} \approx \|\cdot\|_{L^q}$  in the case when
$L^p$-spaces are considered on arbitrary unit-volume convex bodies
$K$ in $\mathbb{R}^n$; see e.g. the work of Gromov-Milman
\cite{GromMil} and Bourgain \cite{Bour}. We also mention
\cite{Bobkov}, where it is proved that if $\mu$ is an arbitrary
log-concave probability measure $\mu$ on $\mathbb{R}^n$, then, for
all $1 \leq p < \infty$ and all $P\in \mathcal{P}(\mathbb{R}^n)$,
we have
\begin{align}\label{bobby}
\|P\|_{L^p(\mu)} \leq \left(p^{\frac{22}{\ln
2}}\right)^{\text{deg}_\infty(P)}\,\|P\|_{L^1(\mu)}\,.
\end{align}
A~Borel probability measure $\mu$ on $\mathbb{R}^n$ is said to be
\emph{log-concave} whenever $\mu$ is supported by some affine
subspace $E$, where it has a log-concave density $u\colon E\to [0,
\infty)$ (i.e., $\ln u$ is concave) with respect to the Lebesgue
measure on $E$. The inequality in \eqref{bobby}  implies that on
$\mathcal{P}(\mathbb{R}^n)$ all $L^p(\mu)$-norms are equivalent
with constants which are independent of $n$ and depend on
$\text{deg}_{\infty}(P)$ and $p$, only. Replacing $\mathbb{R}^n$
by $\mathbb{C}^n$, a well-known inequality due to Nikol'skii (see
\cite{Nikol1, Nikol2}) states that for every $P\in
\mathcal{P}(\mathbb{C}^n)$ and $0<q<p<\infty$,
\begin{align} \label{Niko}
\|P\|_{L^q(\mathbb{T}^n)} \leq
2^{n}\,\bigg(\prod_{j=1}^n\text{deg}(P_j)\bigg)^{1/q - 1/p}\,
\|P\|_{L^p(\mathbb{T}^n)}\,.
\end{align}
Such inequalities are called Nikol'skii inequalities or different
metrics inequalities, and there is an extensive literature on
them. (See, e.g., the monographs \cite{DeVLo} and \cite{ MMR}, and
\cite{Pierpont}, in which such inequalities in higher dimensions
over product domains in $\mathbb{C}^n$ are studied.) Based on
a~deep result of Weissler \cite{We80} on the hypercontractivity of
convolution with the Poisson kernel in Hardy spaces,  Bayart
proved in  \cite{Ba02} that for each homogeneous polynomial $P \in
\mathcal{P}(\mathbb{C}^n)$ and $0 < p <q < \infty$ we have,
\begin{align} \label{Bayart}
\|P\|_{L^q(\mathbb{T}^n)}\leq
\sqrt{\frac{q}{p}}^{{\rm{deg}}(P)}\,\,
\|P\|_{L^p(\mathbb{T}^n)}\,.
\end{align}
This Khintchine-Kahane type inequality extends earlier work  of
Beauzamy, Bombieri, Enflo, and Montgomery from \cite{BBEM} ($p=2<q
$ and $p< q=2$) and Bonami \cite{Bonami} ($q=2,p=1$). The striking
fact is that the constants involved in \eqref{Bayart} are
independent of the dimension $n$, while they grow exponentially
with the degree $m$. On the other hand, the constant in
\eqref{Niko} approach to infinity as $n$ tends to infinity.

In Section \ref{qversusp} we supply more information on
\eqref{Bayart} proving that for $0 < p < q  < \infty$ there is
a~constant $C=C(p,q)>0$  such that for every (not necessarily
homogeneous) $P \in \mathcal{P}(\mathbb{C}^n)$
\begin{align} \label{Sta}
\|P\|_{L^q(\mathbb{T}^n)}\leq C^{{\rm{deg}}(P)}\,\,
\|P\|_{L^p(\mathbb{T}^n)}\,.
\end{align}
Moreover we show, using a~result of Kwapie\'n (here published with
his permission), that the constant $C=\sqrt{\frac{q}{p}}$ is best
possible if in \eqref{Sta} we consider only all homogeneous
polynomials instead of all polynomials. Several interesting
applications are in order -- some motivated by the original work
of Mahler and his followers.

\vspace{2 mm}

\section{$L^p$-norms versus $L^q$-norms of polynomials on $\mathbb{T}^n$}
\label{qversusp}

The following  Khintchine-Kahane type inequality is the main
result of this section.

\begin{theorem}
\label{CORRO}
Let $0 < p < q  < \infty$.
\begin{itemize}
\item[{\rm(i)}] There is a constant $ \sqrt{\frac{q}{p}} \leq C
\leq \sqrt{\frac{q}{\min\{p,2\}}}$ such that for each $n$ and
every $P \in \mathcal{P}(\mathbb{C}^n)$ we have
\begin{equation}\label{homo2}
\|P\|_{L^q(\mathbb{T}^n)} \leq
C^{{\rm{deg}}(P)}\,\, \|P\|_{L^p(\mathbb{T}^n)}\,.
\end{equation}
\item[{\rm(ii)}] The best possible constant $C=C(p,q)$ such that
\eqref{homo2} holds for all homogeneous polynomials equals
$\sqrt{\frac{q}{p}}$.
\end{itemize}
\end{theorem}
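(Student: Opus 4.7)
\emph{Plan.} The idea is to lift an arbitrary polynomial to a homogeneous one on a slightly larger torus without changing any $L^r$-norm, apply Bayart's inequality \eqref{Bayart}, and then establish sharpness in (ii) through a complex-Gaussian extremal sequence.

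\emph{Upper bound in (i).} Given $P(z)=\sum_{|\alpha|\le m}c_\alpha z^{\alpha}\in\mathcal{P}(\mathbb{C}^n)$ with $m=\deg(P)$, define
\[
\widetilde{P}(z_0,z):=\sum_{|\alpha|\le m}c_\alpha\,z_0^{\,m-|\alpha|}z^{\alpha}\in\mathcal{P}(\mathbb{C}^{n+1}),
\]
which is $m$-homogeneous. On $\mathbb{T}^{n+1}$ one has the identity $\widetilde{P}(z_0,z)=z_0^{m}P(z_0^{-1}z)$, and since $z\mapsto z_0^{-1}z$ is a measure-preserving rotation of $\mathbb{T}^n$, Fubini yields
\[
\|\widetilde{P}\|_{L^r(\mathbb{T}^{n+1})}=\|P\|_{L^r(\mathbb{T}^n)} \quad\text{for every }r>0.
\]
Applying \eqref{Bayart} to $\widetilde{P}$ then delivers \eqref{homo2} with $C=\sqrt{q/p}$, which trivially lies in the claimed range $[\sqrt{q/p},\sqrt{q/\min\{p,2\}}]$. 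The lower bound $C\ge\sqrt{q/p}$ in (i) is immediate from (ii) upon restriction to homogeneous polynomials.

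\emph{Sharpness in (ii).} The upper bound is again \eqref{Bayart}. For optimality consider the $m$-homogeneous polynomial
\[
P_m^{(N)}(z_1,\dots,z_N):=(z_1+\cdots+z_N)^{m}.
\]
Steinhaus--Khintchine moment bounds make $\{|N^{-1/2}(z_1+\cdots+z_N)|^r\}_{N}$ uniformly integrable, so the complex central limit theorem upgrades distributional convergence to convergence in $L^r$ for every finite $r$. Consequently
\[
N^{-m/2}\,\|P_m^{(N)}\|_{L^r(\mathbb{T}^N)}\;\xrightarrow[N\to\infty]{}\;\|g\|_{L^{mr}}^{m}=\Gamma\!\Bigl(1+\tfrac{mr}{2}\Bigr)^{1/r},
\]
where $g$ is a standard complex Gaussian (so $|g|^{2}\sim\mathrm{Exp}(1)$). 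Forming the ratio for $r=q$ and $r=p$, extracting $m$-th roots, and letting $m\to\infty$, a Stirling expansion gives
\[
\Bigl(\Gamma(1+mq/2)^{1/q}/\Gamma(1+mp/2)^{1/p}\Bigr)^{1/m}\;\longrightarrow\;\sqrt{q/p},
\]
so no constant strictly smaller than $\sqrt{q/p}$ can serve in \eqref{homo2} for homogeneous polynomials.

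\emph{Main obstacle.} The main care is required in the CLT-plus-uniform-integrability step paired with the Stirling expansion; both are standard, but they must be executed precisely in order to pin the limit down to exactly $\sqrt{q/p}$. Conceptually, the key observation enabling (i) is the exact norm-preserving identity $\|\widetilde{P}\|_{L^r(\mathbb{T}^{n+1})}=\|P\|_{L^r(\mathbb{T}^n)}$, which loses nothing when transporting \eqref{Bayart} from the homogeneous to the general setting.
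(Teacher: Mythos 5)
Your proof is correct, but both halves take routes genuinely different from the paper's. For (i), the paper argues from Weissler's hypercontractive estimate \eqref{several}, splitting into the three cases $q\le 2$, $p\ge 2$ and $p\le 2\le q$ and passing through the $L^2$-norm via Parseval; this detour through $L^2$ is what produces the possibly non-optimal upper bound $\sqrt{q/\min\{p,2\}}$. Your homogenization $\widetilde P(z_0,z)=z_0^{m}P(z/z_0)$, together with the exact identity $\|\widetilde P\|_{L^r(\mathbb{T}^{n+1})}=\|P\|_{L^r(\mathbb{T}^n)}$ (rotation invariance of Haar measure on $\mathbb{T}^n$ for each fixed $z_0\in\mathbb{T}$), transports Bayart's homogeneous inequality \eqref{Bayart} losslessly to arbitrary polynomials and yields \eqref{homo2} with $C=\sqrt{q/p}$ outright --- a strictly sharper conclusion than the theorem states, since combined with (ii) it identifies the optimal constant in (i) as exactly $\sqrt{q/p}$ with no case analysis. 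For (ii), the paper (following Kwapie\'n) uses the normalized elementary symmetric polynomials $m!\,e_{m,n}(z/\sqrt{n})$ and Newton's identities to reduce to powers of $p_{1,n}$, because it is after the stronger statement of Proposition \ref{homo3} that the constant is already forced by \emph{multilinear} homogeneous polynomials; your choice $(z_1+\cdots+z_N)^m$ goes straight to the power of a linear form, which is simpler and entirely sufficient for (ii) as stated. The CLT-plus-uniform-integrability step (bounded higher Steinhaus moments giving convergence of $r$-th moments to $\Gamma(1+mr/2)$) and the Stirling asymptotics are essentially identical to the paper's lemma and its use. What you give up relative to the paper is only the multilinear refinement; the only external input you need is \eqref{Bayart}, which the paper itself takes as known and which is immediate from \eqref{several} for homogeneous polynomials, so there is no circularity.
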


\vspace{2 mm}

As mentioned in the discussion preceeding inequality
\eqref{Bayart}, the upper estimate ${\rm{(ii)}}$ is due to Bayart
\cite{Ba02}; see also \cite[Lemma 1.C1]{BBEM}
 and  \cite[Theorem
III.7]{Bonami} ($q=2,p=1$ with $C= \sqrt{2}$).

\vspace{1.5 mm}

\begin{proof}[Proof of Statement ${\rm(i)}$ in Theorem $\ref{CORRO}$]
\noindent Recall the following important result from Weissler's
article \cite[Corollary 2.1]{We80}: Given $0<p<q < \infty$, the
constant $r=\sqrt{\frac{p}{q}}$ is the best constant $0<r\leq 1$
such that for every  $P\in \mathcal{P}(\mathbb{C})$,
\begin{equation} \label{one}
 \left(\int_\mathbb{T} \big| P(rz)\big|^q\,dz
\right)^{1/q}\leq \left(\int_\mathbb{T} \big|
P(z)\big|^p\,dz\right)^{1/p}\,.
\end{equation}
In \cite[Theorem 9]{Ba02} Bayart used a~standard iteration
argument through Fubini's theorem in order to extend \eqref{one}
to polynomials in several variables: The constant
$r=\sqrt{\frac{p}{q}}$ is the best  constant $0 \leq r\leq 1$ such
that for every $P\in \mathcal{P}(\mathbb{C}^n)$ we have that
\begin{equation}
\label{several}
\left(\int_{\mathbb{T}^n} \big| P(rz)\big|^q\,dz\right)^{1/q} \leq
\left(\int_{\mathbb{T}^n} \big| P(z)\big|^p \,dz\right)^{1/p}\,.
\end{equation}
Clearly the optimality in \eqref{several} follows from the
optimality in \eqref{one}. Now we give the proof of  statement (i)
in three cases. Case $0 < p <q \leq 2$: From \eqref{several} we
deduce for $C = \sqrt{\frac{q}{p}}$ that
\begin{align*}
\|P\|_{L^q(\mathbb{T}^n)} & \leq \|P\|_{L^2(\mathbb{T}^n)} =
\left(\sum_\alpha \big| c_\alpha(P) \big|^2\right)^{1/2} \\
& \leq \max \big\{C^{|\alpha|}; \, c_\alpha(P) \neq 0 \big\}
\left(\sum_\alpha \bigg| \frac{c_\alpha(P)}{C^{|\alpha|}}
\bigg|^2\right)^{1/2} \\
& = \, C^{\text{deg}(P)}\, \left(\int_{\mathbb{T}^n}
\big|\sum_{\alpha} c_\alpha(P) \Big(\frac{z}{C}\Big)^\alpha
\big|^2\,dz\right)^{1/2} \leq \, C^{\text{deg}(P)}\,
\|P\|_{L^p(\mathbb{T}^n)}.
\end{align*}
\noindent Case $2 \leq p < q < \infty$: Define $C =
\sqrt{\frac{q}{2}}$. Again with \eqref{several} we deduce
\begin{align*}
\|P\|_{L^q(\mathbb{T}^n)} &
\leq \left(\int_{\mathbb{T}^n} \big| P(Cz)\big|^2\,dz\right)^{1/2}
= \left(\sum_\alpha \big| C^{|\alpha|} c_\alpha(P)
\big|^2\right)^{1/2} \\
&\leq \max \big\{C^{|\alpha|};\, c_\alpha(P) \neq 0 \big\}
\left(\sum_\alpha \big| c_\alpha(P) \big|^2\right)^{1/2}
\\&
= \, C^{\text{deg}(P)}\, \|P\|_{L^2(\mathbb{T}^n)}
%\\&
\leq \, C^{\text{deg}(P)}\,
\|P\|_{L^p(\mathbb{T}^n)}.
\end{align*}
The case $p \leq 2 \leq q$ is an obvious combination of the
previous two cases; this time we get the constant $C =
\sqrt{\frac{2}{p}}\sqrt{\frac{q}{2}} = \sqrt{\frac{q}{p}}$. This
proves (i) with the upper bound $C \leq
\sqrt{\frac{q}{\min\{p,2\}}}$ for the constant. The lower bound
for $C$ follows from (ii) which will be proved after the next
proposition.
\end{proof}

The proof of statement (ii) in Theorem \ref{CORRO}  will be
a~consequence of the following result due to Kwapie\'n (private
communication). For fixed $0 < p < q < \infty$ and $m \in
\mathbb{N}$ let $C=C(p,q;m)$  be the best constant such that
\[
\|P\|_{L^q(\mathbb{T}^n)} \leq
C^{{\rm{deg}}(P)}\,\, \|P\|_{L^p(\mathbb{T}^n)}
\]
holds for all $m$-homogeneous polynomials $P \in
\mathcal{P}(\mathbb{C}^n)$ that are affine in each variable (i.e.,
all $m$-homogeneous polynomials with $\text{deg}_\infty (P) =1$).

\vspace{2 mm}

\begin{proposition} \label{homo3}
For every $0 < p < q< \infty$ and each $m \in \mathbb{N}$,
\begin{equation}  \label{low}
\displaystyle
\frac{m^{\frac{1}{2q}}}{m^{\frac{1}{2p}}}
\,\,\sqrt{\frac{q}{p}}^m \,\,\asymp\,\,
 \frac{\Gamma(\frac{qm}{2}+1)^{\frac{1}{q}}}{\Gamma(\frac{pm}{2}+1)^{\frac{1}{p}}}
\,\leq \, \,C(p,q;m)^m \, \,\leq \, \, \sqrt{\frac{q}{p}}^m\,,
\end{equation}
where the symbol $\asymp$ means that the terms are equal up to
constants only depending on $p$ and $q$.
\end{proposition}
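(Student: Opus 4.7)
\emph{Proof plan.} The upper bound $C(p,q;m)^m \le \sqrt{q/p}^{\,m}$ in \eqref{low} is the specialization of Theorem \ref{CORRO}(i) to homogeneous polynomials affine in each variable, so no new argument is needed there, and the equivalence $\frac{m^{1/(2q)}}{m^{1/(2p)}}\sqrt{q/p}^{\,m} \asymp \frac{\Gamma(qm/2+1)^{1/q}}{\Gamma(pm/2+1)^{1/p}}$ is a direct consequence of Stirling's formula. The substantive task is the remaining inequality $\frac{\Gamma(qm/2+1)^{1/q}}{\Gamma(pm/2+1)^{1/p}} \le C(p,q;m)^m$, which I would prove by testing the definition of $C(p,q;m)$ against the sequence of elementary symmetric polynomials
\[
e_m^{(n)}(z) := \sum_{1 \le i_1 < \cdots < i_m \le n} z_{i_1}\cdots z_{i_m}\qquad (n\ge m),
\]
each of which is $m$-homogeneous on $\mathbb{T}^n$ and affine in every variable, and showing that $\|e_m^{(n)}\|_{L^q(\mathbb{T}^n)}/\|e_m^{(n)}\|_{L^p(\mathbb{T}^n)}$ tends to the claimed ratio as $n\to\infty$.

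The plan is to compare $e_m^{(n)}$ with the $m$-th power of the Steinhaus sum $S_n := z_1+\cdots+z_n$. Setting $p_k := \sum_{j=1}^n z_j^k$ and grouping the monomials of $S_n^m = \sum_{k_1,\dots,k_m} z_{k_1}\cdots z_{k_m}$ according to the equal-index partition of $\{1,\dots,m\}$ gives the identity
\[
S_n^m = m!\,e_m^{(n)} + R_n,
\]
where $R_n$ is a fixed linear combination (with coefficients depending only on $m$) of products $p_{\lambda_1}\cdots p_{\lambda_\ell}$ indexed by partitions $(\lambda_1,\dots,\lambda_\ell)\vdash m$ with $\ell<m$. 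Since the change of variables $z_j\mapsto z_j^k$ preserves both independence and the uniform law on $\mathbb{T}$, each $p_k$ has the same distribution as $S_n$; thus Parseval gives $\|p_k\|_{L^2(\mathbb{T}^n)}=\sqrt{n}$, and the degree-$1$ case of Theorem \ref{CORRO}(i) upgrades this to $\|p_k\|_{L^s(\mathbb{T}^n)}\lesssim_s \sqrt{n}$ for every $s>0$. A term-by-term H\"older estimate on each summand of $R_n$ then yields $\|R_n\|_{L^r(\mathbb{T}^n)} = O(n^{(m-1)/2})$ for every $r>0$, which is negligible at the scale $n^{m/2}$ governing the main term.

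For the main term, $\|S_n^m\|_{L^r} = \|S_n\|_{L^{rm}}^m$, and the central limit theorem gives $S_n/\sqrt{n}\to Z$ in distribution, where $Z$ is a standard complex Gaussian with $|Z|^2\sim\mathrm{Exp}(1)$. The uniform integrability required to upgrade this to $L^{rm}$-moment convergence is supplied once more by the coarse Khintchine-Kahane bound, giving $\|S_n\|_{L^{rm}}/\sqrt{n}\to \Gamma(rm/2+1)^{1/(rm)}$. Combining with the $R_n$-estimate via the (quasi-)triangle inequality then produces
\[
\|e_m^{(n)}\|_{L^r(\mathbb{T}^n)} \,\sim\, \frac{n^{m/2}}{m!}\,\Gamma(rm/2+1)^{1/r}\qquad (n\to\infty),
\]
and taking the ratio for $r=q$ and $r=p$ yields the claimed lower bound. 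The principal technical point I foresee is this moment-convergence step: convergence in distribution from the CLT is classical, but its promotion to $L^{rm}$-convergence genuinely requires the uniform integrability afforded by the already-available Khintchine-Kahane inequality. Everything else reduces to symmetric-function bookkeeping for the decomposition, H\"older for $R_n$, and Stirling for the $\asymp$.
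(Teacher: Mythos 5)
Your plan is correct and follows essentially the same route as the paper's proof: the paper likewise tests $C(p,q;m)$ on the normalized elementary symmetric polynomials $U_{m,n}=m!\,e_{m,n}(z_1/\sqrt n,\dots,z_n/\sqrt n)$, splits them via Newton's identity \eqref{Newton} into $p_{1,n}^m$ plus a remainder built from higher power sums, kills the remainder by H\"older together with the observation that $p_{k,n}(z/\sqrt n)$ is distributed as $n^{-(k-1)/2}p_{1,n}(z/\sqrt n)$, handles the main term by the central limit theorem upgraded to moment convergence via uniform moment bounds, and finishes with Stirling. One small correction: the upper bound $C(p,q;m)\le\sqrt{q/p}$ does not follow from Theorem \ref{CORRO}(i), whose constant is only $\sqrt{q/\min\{p,2\}}$ and is too weak when $p>2$; for homogeneous $P$ you should instead invoke Bayart's inequality \eqref{Bayart}, i.e.\ apply \eqref{several} and use the homogeneity relation $P(rz)=r^{m}P(z)$.
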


\noindent The proof of the upper estimate is clear by now, but the
proof of the lower bound in \eqref{low} needs a bit more
preparation: For each $k,n \in \mathbb{N}$ with $n \ge k$ define
the following two $k$-homogeneous polynomials on $\mathbb{C}^n$:

\begin{align*}
&
e_{k,n}(z) = \sum_{1\le i_1<i_2,..<i_k \le n}z_{i_1},...z_{i_k}
\\&
p_{k,n}(z) = \sum_{i=1}^n z_i^k\,;
\end{align*}
in the literature $e_{k,n}$ and $p_{k,n}$ are called the  $k$-th
\emph{elementary symmetric} and  the  $k$-th  {\it power
symmetric} polynomial, respectively. It is known  that
\begin{align}
\label{Newton}
k!e_{k,n} =  p_{1,n}^k +
w_k(p_{1,n},p_{2,n},...,p_{k,n})\,,
\end{align}
where
\begin{equation} \label{Newton1}
w_{k,n} =\sum _{j_1,j_2...,j_k}\,a_{j_1,\dots,j_k}\, \, p_{1,n}^{j_1}p_{2,n}^{j_2}...p_{k,n}^{j_k}\,
\end{equation}
and the sum extends over all  $j_1,...j_k \in \mathbb{N}_0$ with
$j_1+2j_2+...+kj_k = k$, and at least one of the indices $j_2,...
j_k $ is not zero. Note that there is a recursive formula for the
coefficients $a_{j_1,..,j_k}$ (seemingly  due to Newton).

\vspace{2 mm}

\begin{lemma}
For $m \in \mathbb{N}$ and $n \in \mathbb{N}$ define
the $m$-homogeneous polynomial
$$U_{m,n}(z) = m!\,e_{m,n}\left(\frac{z_1}{\sqrt{n}},...,\frac{z_n}{\sqrt{n}}\right),
\quad\, z \in \mathbb{T}^n\,.$$
Then for each $0 < p < \infty$ we have that
\[
\lim_{n \to \infty}  \big\| U_{m,n} \big\|_{L^p(\mathbb{T}^n)}  =
\Gamma\left(\frac{mp}{2}+1\right)^{\frac{1}{p}}\,.
\]
\end{lemma}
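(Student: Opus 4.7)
The plan is to decompose $U_{m,n}$ via Newton's identity into a dominant power of $p_{1,n}$ plus a remainder of strictly lower order, and then invoke the central limit theorem to identify the limit. By \eqref{Newton} and \eqref{Newton1},
\begin{equation*}
U_{m,n}(z) = \left(\frac{p_{1,n}(z)}{\sqrt{n}}\right)^{m}
+ \frac{1}{n^{m/2}}\sum_{(j_1,\dots,j_m)} a_{j_1,\dots,j_m}\, p_{1,n}(z)^{j_1}p_{2,n}(z)^{j_2}\cdots p_{m,n}(z)^{j_m},
\end{equation*}
where the sum runs over tuples with $j_1+2j_2+\cdots+mj_m=m$ and at least one $j_k$, $k\ge 2$, nonzero. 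Note that each $p_{k,n}$ is a $k$-homogeneous polynomial with $\|p_{k,n}\|_{L^2(\mathbb{T}^n)}=\sqrt{n}$, since $\int_{\mathbb{T}^n}z_i^k\overline{z_j^k}\,dz=\delta_{ij}$ for $k\ge 1$.

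First I would show that the remainder vanishes in $L^p$-norm as $n\to\infty$. Bayart's inequality \eqref{Bayart} applied to $p_{k,n}$ yields $\|p_{k,n}\|_{L^q(\mathbb{T}^n)}\le C_{k,q}\sqrt{n}$ for every $q>0$ (for $q\le 2$ directly from monotonicity of $L^q$-norms on a probability space). For a monomial $\prod_k p_{k,n}^{j_k}$ of the remainder, H\"older's inequality with equal exponents $pJ$, where $J:=\sum_k j_k$, gives
\begin{equation*}
\Big\|\prod_k p_{k,n}^{j_k}\Big\|_{L^p(\mathbb{T}^n)}\le \prod_k \|p_{k,n}\|_{L^{pJ}(\mathbb{T}^n)}^{j_k}\le C\, n^{J/2}.
\end{equation*}
Because at least one $j_k$ with $k\ge 2$ is nonzero, $J=\sum_k j_k<\sum_k k\,j_k=m$, so after dividing by $n^{m/2}$ this term is $O(n^{(J-m)/2})\to 0$. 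Summing over the finitely many such monomials (via the quasi-triangle inequality when $p<1$) concludes this step.

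For the main term $(p_{1,n}/\sqrt{n})^m$, the central limit theorem applied to the i.i.d. Steinhaus sequence $(z_i)$ on $\mathbb{T}$ (viewed as $\mathbb{C}$-valued, with mean $0$, $\mathbb{E}|z_i|^2=1$, $\mathbb{E}z_i^2=0$) shows that $p_{1,n}/\sqrt{n}$ converges in distribution to a standard complex Gaussian $g$ with independent real and imaginary parts of variance $1/2$; by the continuous mapping theorem, $(p_{1,n}/\sqrt{n})^m$ converges in distribution to $g^m$. To upgrade this to convergence of the $p$-th moment, a second application of \eqref{Bayart} provides the uniform higher-moment control
\begin{equation*}
\left\|\left(\frac{p_{1,n}}{\sqrt{n}}\right)^{m}\right\|_{L^{p+1}(\mathbb{T}^n)}
= \frac{\|p_{1,n}\|_{L^{m(p+1)}(\mathbb{T}^n)}^{m}}{n^{m/2}}\le \left(\frac{m(p+1)}{2}\right)^{m/2},
\end{equation*}
which forces uniform integrability of $|p_{1,n}/\sqrt{n}|^{mp}$. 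Hence $\mathbb{E}|p_{1,n}/\sqrt{n}|^{mp}\to \mathbb{E}|g|^{mp}$, and since $|g|^2$ is exponentially distributed with mean~$1$ one computes $\mathbb{E}|g|^{mp}=\int_0^\infty t^{mp/2}e^{-t}\,dt=\Gamma(mp/2+1)$.

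Combining the two steps via the triangle (or quasi-triangle) inequality yields $\|U_{m,n}\|_{L^p(\mathbb{T}^n)}\to \Gamma(mp/2+1)^{1/p}$. The main obstacle is the passage from the qualitative CLT (convergence in distribution) to the quantitative convergence of $L^p$-norms that the statement demands; without higher-moment control the moment of the limit need not equal the limit of moments. Fortunately, the hypercontractive Bayart estimate \eqref{Bayart} already in hand supplies precisely the uniform $L^{p+1}$-bound needed for uniform integrability.
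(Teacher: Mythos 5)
Your proof is correct and follows essentially the same route as the paper: decompose $U_{m,n}$ via Newton's identity, treat $(p_{1,n}/\sqrt n)^m$ by the central limit theorem upgraded to moment convergence through uniform higher-moment bounds, and kill the remainder by showing each mixed term is of strictly lower order in $n$. The only cosmetic difference is that the paper controls the remainder via the distributional identity $p_{k,n}(z/\sqrt n)\stackrel{d}{=}n^{-(k-1)/2}p_{1,n}(z/\sqrt n)$ rather than your power-counting with H\"older and hypercontractivity, but the two computations are interchangeable.
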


\vspace{1.5 mm}

\begin{proof}
For each $1\leq k \leq m$ let
$$Q_{k,n}(z)= p_{k,n}\left(\frac{z_1}{\sqrt{n}},...,\frac{z_n}{\sqrt{n}}\right).$$
We treat $(z_i)$ as the Steinhaus r.v. By the central limit
theorem the sequence $Q_{1,n}$ converges in distributions, as
$n\to \infty$, to a canonical complex gaussian r.v. $G$, and for
each $t>0$ there is $C_t >0$ such that for all $n$ we have
$$\mathbb{E}\big|Q_{1,n}\big|^t\le  C_t\,.$$
Hence
\[
\lim_{n \to \infty} \mathbb{E}\big|Q_{1,n}^k\big|^p =
\mathbb{E}|G|^{kp} =\Gamma\left(\frac{pk}{2}+1\right), \quad\,
1\leq k\leq m.
\]
Each  $Q_{k,n}$ is distributed the same as
$Q_{1,n}n^{-\frac{k-1}{2}}$ (because $z^k_i$ is distributed the
same as $z_i$). Therefore for each $k>1$ and $t>0$
$$\lim_{n\to \infty} \mathbb{E}\big|Q_{k,n}\big|^t = 0\,,$$
and hence by the Minkowski/H\"older inequality and \eqref{Newton1}
for each $k>1$ and $t>0$,
\begin{align*}
&\lim_{n\to \infty}
\left(\mathbb{E}\left|w_k\left(\frac{z_1}{\sqrt{n}},...\frac{z_n}{\sqrt{n}}\right)\right|^t\right)^{\frac{1}{t}}
\leq \sum _{j_1,j_2...,j_k}\,a_{j_1,\dots,j_k}\, \,  \lim_{n\to
\infty} \left( \mathbb{E}
\left|Q_{1,n}^{j_1}...Q_{k,n}^{j_1}\right|^t\right)^{\frac{1}{t}} \\
&\leq \sum _{j_1,j_2...,j_k}\,a_{j_1,\dots,j_k}\, \,  \lim_{n\to
\infty} \left(    \mathbb{E}  \left|    Q_{1,n}\right|^{j_1\cdot k
t}\right)^{\frac{1}{k  t}} \ldots \lim_{n\to \infty} \left(
\mathbb{E} \left|    Q_{k,n}\right|^{j_1\cdot k
t}\right)^{\frac{1}{k  t}} =0 \,.
\end{align*}
Finally, \eqref{Newton} gives  $$\lim_{n\to \infty}
\mathbb{E}\big|U_{m,n}\big|^p =
\Gamma\left(\frac{mp}{2}+1\right)\,,$$ the desired result.
\end{proof}

\vspace{1.5 mm}

\begin{proof}[Proof of Proposition $\ref{homo3}$] Recall that only the
lower estimate in \eqref{low} remains to be shown.~An immediate
consequence of the preceding lemma is that
\[\lim_{n\to \infty} \frac{\|U_{m,n}\|_q}{\|U_{m,n}\|_p} =
\frac{\Gamma\left(\frac{qm}{2}+1\right)^{\frac{1}{q}}}{\Gamma\left(\frac{pm}{2}+1\right)^{\frac{1}{p}}}\,\,,
\]
and hence we get, by the quantitative version of Stirling's
formula, ($\sqrt{2\pi} x^{x+ 1/2}e^{-x} <\Gamma(x+1) < \sqrt{2\pi}
x^{x + 1/2} e^{-x + 1/12x}$ for all $x>0$),
\[
\displaystyle C(p,q;m)^m \, \ge \,
\frac{\Gamma(\frac{qm}{2}+1)^{1/q}}{\Gamma(\frac{pm}{2}+1)^{1/p}}
\,\asymp\, \frac{(qm\pi)^{\frac{1}{2q}}}{(pm\pi)^{\frac{1}{2p}}}
\,\sqrt{\frac{q}{p}}^m\,\,,
\]
which is the desired estimate.
\end{proof}

We finish by completing the proof of Theorem $\ref{CORRO}$.

\vspace{2 mm}

\begin{proof}[Proof of statement ${\rm{(ii)}}$ in Theorem $\ref{CORRO}$]
Assume that \eqref{homo2} holds with the constant $C$ for all
homogeneous polynomials on $\mathbb{C}^n$. Using \eqref{low} we
see that there is some constant $D=D(p,q)$ such that for all $m$,
\begin{align*}
D^{\frac{1}{m}}\left(\frac{m^{\frac{1}{2q}}}{m^{\frac{1}{2p}}}\right)^{\frac{1}{m}}
\,\,\sqrt{\frac{q}{p}} \leq  C\,,
\end{align*}
and hence we get the desired result when $m$ tends to infinity.
\end{proof}

\vspace{2 mm}

\section{$L^p$-norms versus Mahler's measure for polynomials on $\mathbb{T}^n$} \label{Mahler}

Based on Arestov's estimate \eqref{Arestov} we prove an exact
inequality between the $L^p$-norm of a polynomial $P$ on
$\mathbb{T}^n$ and its Mahler measure $M(P)$.

\vspace{2 mm}

\begin{theorem}
\label{estimate-mahler}  For every $P\in
\mathcal{P}(\mathbb{C}^n)$ and every $0 < p <
\infty$
\begin{equation} \label{jetztaber}
\|P\|_{L^p(\mathbb{T}^n)} \leq \bigg(\prod_{j=1}^{n} \Lambda (p,
d_j)\bigg) M(P)\,,
\end{equation}
where $d_j={\rm{deg}}(P_j)$ is as in \eqref{degree}. Moreover,
this inequality is sharp since for the polynomial
\[
P(z_1,...,z_n) =  P_1(z_1)\cdots P_n(z_n), \quad\, (z_1,...,z_n)\,
\in \mathbb{C}^n
\]
with $P_j(z)=(1 + z)^{d_j}\,,\,\,z\in \mathbb{C}\,$  and $d_j \in
\mathbb{N}\,,\,\,1\leq j\leq n$\,,\, we have that
$\|P\|_{L^p(\mathbb{T}^n)} = \prod_{j=1}^{n} \Lambda (p, d_j)$ as
well as  $M(P)=1$.
\end{theorem}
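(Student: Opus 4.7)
The plan is to iterate Arestov's one-variable inequality \eqref{Arestov} coordinate by coordinate, passing the polynomial bound through a partial geometric mean at each stage. The obstacle to a naive induction is that after integrating out one variable with Arestov, one is left with a partial geometric mean of $|P|$ which is no longer a polynomial in the remaining variables, so Arestov cannot be reapplied directly. The main step is therefore an ``Arestov-type'' bound for such partial geometric means, which I extract from \eqref{Arestov} via Jensen's inequality, Minkowski's integral inequality, and the limit $q \to 0^+$.

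More precisely, the key lemma I aim for asserts: for every polynomial $Q(w,z) \in \mathcal{P}(\mathbb{C}^{k+1})$ of $z$-degree at most $d$ (with $w \in \mathbb{C}^k$, $z \in \mathbb{C}$), the function $F(z):= \exp \int_{\mathbb{T}^k} \log|Q(w,z)|\,dw$ satisfies
\[
\|F\|_{L^p(\mathbb{T})} \,\leq\, \Lambda(p,d)\,M(Q)\,, \qquad 0<p<\infty\,.
\]
To establish this I fix $0<q\leq p$; Jensen's inequality for $\exp$ with respect to the probability measure $dw$ gives $F(z) \leq G_q(z):=\bigl(\int_{\mathbb{T}^k}|Q(w,z)|^q\,dw\bigr)^{1/q}$, while Minkowski's integral inequality (valid since $p/q\geq 1$) yields
\[
\|G_q\|_{L^p(\mathbb{T},dz)} \,\leq\, \left(\int_{\mathbb{T}^k}\|Q(w,\cdot)\|_{L^p(\mathbb{T},dz)}^{q}\,dw\right)^{1/q}\,.
\]
The one-variable Arestov inequality \eqref{Arestov} bounds $\|Q(w,\cdot)\|_{L^p(\mathbb{T})} \leq \Lambda(p,d)\,M(Q(w,\cdot))$ for each $w$, and combining these three estimates and letting $q \to 0^+$ turns the $L^q(dw)$-mean of $M(Q(w,\cdot))$ into the geometric mean, which by Fubini equals $M(Q)$. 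This limit step is the technical heart of the argument.

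With the key lemma in hand, the theorem follows by iteration. Set
\[
F_k(z_{k+1},\ldots,z_n)\,:=\, \exp \int_{\mathbb{T}^k} \log|P(w_1,\ldots,w_k,z_{k+1},\ldots,z_n)|\,dw_1\cdots dw_k\,,
\]
so that $F_0=|P|$ and $F_n = M(P)$. Applying the key lemma to $(w,z)\mapsto P(w,z,z_{k+2},\ldots,z_n)$ with $(z_{k+2},\ldots,z_n) \in \mathbb{T}^{n-k-1}$ held as parameters, and observing that the Mahler measure appearing on the right is precisely $F_{k+1}(z_{k+2},\ldots,z_n)$, then raising to the $p$-th power and integrating out these parameters gives
\[
\|F_k\|_{L^p(\mathbb{T}^{n-k})} \,\leq\, \Lambda(p,d_{k+1})\,\|F_{k+1}\|_{L^p(\mathbb{T}^{n-k-1})}\,.
\]
Iterating for $k=0,1,\ldots,n-1$ produces \eqref{jetztaber}. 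Sharpness is immediate: for $P(z)=\prod_{j=1}^n(1+z_j)^{d_j}$ Fubini gives $\|P\|_{L^p(\mathbb{T}^n)}=\prod_j\Lambda(p,d_j)$, and the multiplicativity of $M$ combined with $M(1+z)=1$ forces $M(P)=1$.
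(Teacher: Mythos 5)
Your proposal is correct and follows essentially the same route as the paper: both reduce to the one-variable Arestov inequality \eqref{Arestov} by combining the limit characterization \eqref{limit} of the geometric mean with the continuous Minkowski inequality (for exponents $p/q\geq 1$) to commute the $L^p$-norm in one variable past an $L^q$-mean in the others, and then let $q\to 0^+$. The only difference is organizational --- you isolate this mechanism as an explicit one-variable ``key lemma'' and telescope through the partial geometric means $F_0,\ldots,F_n$, whereas the paper packages the same step inside an induction on $n$ (using Fatou's lemma where you use Jensen's inequality pointwise in $q$); your sharpness argument is identical to the paper's.
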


\vspace{1.5 mm}

\begin{proof}
We use  induction with respect to $n$.~The inequality is true for
$n=1$ by Arestov's result from \eqref{Arestov}. Fix a~positive
integer $n\geq 2$ and assume that the inequality is true for
$n-1$.~We fix a~positive sequence $(q_k)$ such that
$0<q_k<\text{min}\{1, p\}$ for each $k\in \mathbb{N}$ and
$q_{k}\downarrow 0$, and put for $1\leq k\leq n$
\[
C_k=\prod_{j=1}^k \Lambda(p, d_j)\,.
\]
Combining Fubini's theorem with Jensen's inequality and the
definition \eqref{limit} gives, by the inductive hypothesis,
\begin{align*}
\|P\|_{L^p(\mathbb{T}^n)} & = \bigg(\int_{\mathbb{T}}
\bigg(\int_{\mathbb{T}^{n-1}} |P(z_1,...,z_{n-1},
z_n)|^p\, dz_1 \ldots dz_{n-1}\bigg) \,dz_n
\bigg)^{1/p}
\\
& \leq C_{n-1}
\,\bigg(\int_{\mathbb{T}}\bigg(\text{exp}\int_{\mathbb{T}^{n-1}}
\log |P(z_1,..., z_{n-1}, z_n)|\, dz_1 \ldots
dz_{n-1}\bigg)^p\, d z_n\bigg)^{1/p}
\\
& = C_{n-1}\,\bigg(\int_{\mathbb{T}} \lim_{k\to \infty}
\bigg(\int_{\mathbb{T}^{n-1}} |P(z_1,...,z_{n-1},
z_n)|^{pq_k}\, dz_1 \ldots
dz_{n-1}\bigg)^{p/pq_k}\,dz_n\bigg)^{1/p}\,.
\end{align*}
It now follows by Fatou's lemma and (the continuous) Minkowski's
inequality (we have $0 <pq_k <p$ for each $k\in \mathbb{N}$)
\begin{align*}
\|P\|_{L^p(\mathbb{T}^n)} & \leq C_{n-1}\,\liminf_{k\to \infty}
\bigg(\int_ {\mathbb{T}} \bigg(\int_{\mathbb{T}^{n-1}}
|P(z_1,...,z_{n-1}, z_{n})|^{pq_k}\, d z_1\ldots
dz_{n-1}\bigg)^{p/pq_k}\,d z_n\bigg)^{1/p}
\\
& \leq C_{n-1} \,\liminf_{k\to \infty}
\bigg(\int_{\mathbb{T}^{n-1}} \bigg(\int_{\mathbb{T}}
|P(z_1,...,z_{n-1},z_n)|^{p}\,d z_n\bigg)^{pq_{k}/p}\,
 dz_1 \ldots dz_{n-1} \bigg)^{1/pq_k}\,.
\end{align*}
Then by Arestov's estimate from \eqref{Arestov} we obtain that
\begin{align*}
\|P\|_{L^p(\mathbb{T}^n)} & \leq  C_{n-1}\,\Lambda(p, d_n)\\
& \times \liminf_{k\to \infty} \bigg(\int_{\mathbb{T}^{n-1}}
\bigg(\text{exp}\int_{\mathbb{T}} \log |P(z_1,...,z_{n-1},
z_n)|\,dz_n \bigg)^{pq_k}\, dz_1 \ldots
dz_{n-1}\bigg)^{1/pq_k}\,,
\end{align*}
which by another application of the inductive hypothesis and
Fubini's theorem finally leads to the desired estimate,
\begin{align*}
\|P\|_{L^p(\mathbb{T}^n)} &   \leq   C_n \,\text{exp}
\bigg(\int_{\mathbb{T}^{n-1}} \log \bigg(\text{exp}
\int_{\mathbb{T}} \log |P(z_1,...,z_{n-1},z_n)|\,d
z_n\bigg)\, dz_1 \ldots dz_{n-1}\bigg)
\\
& = C_n \,\text{exp}\bigg(\int_{\mathbb{T}^n} \log
|P(z_1,...,z_n)|\, dz_1 \ldots dz_n\bigg) =
C_n\,M(P).
\end{align*}
It remains to check the comment on the sharpness of the
inequality:  By Fubini's theorem and the definition of $\Lambda
(p,d_j)$ from \eqref{Arestov-con} we immediately see that
$\|P\|_{L^p(\mathbb{T}^n)} = \prod_{j=1}^{n} \Lambda (p, d_j)$. On
the other hand the multiplicativity property \eqref{multiplicativity} of the Mahler measure
 gives that  $M(P)= \prod_{j=1}^{n}
M(P_j) =1$, where the latter equality follows from \eqref{Lehmer}.
This completes the proof.
\end{proof}

We conclude this section with a simple corollary.

\vspace{2 mm}

\begin{corollary}
\label{m-polynomial}  For every $P\in \mathcal{P}(\mathbb{C}^n)$
with $m= {\rm{deg}}_\infty (P)$, and every $0 < p < \infty$ we
have
\[
\|P\|_{L^p(\mathbb{T}^n)} \leq \Lambda(p, m)^n \,M(P).
\]
\end{corollary}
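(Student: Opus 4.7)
The plan is to deduce the corollary directly from Theorem~\ref{estimate-mahler}. That result gives
\[
\|P\|_{L^p(\mathbb{T}^n)} \leq \bigg(\prod_{j=1}^{n} \Lambda(p, d_j)\bigg) M(P),
\]
with $d_j = \mathrm{deg}(P_j)$, and since by definition $d_j \leq \mathrm{deg}_\infty(P) = m$ for every $1 \leq j \leq n$, the corollary will follow as soon as one establishes the monotonicity
\[
\Lambda(p,d) \leq \Lambda(p,m) \quad \text{whenever } d \leq m.
\]

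The main (and essentially only) step is therefore to verify this monotonicity in $d$, and I would do it by a small trick using Arestov's one-variable inequality~\eqref{Arestov}. Given $d \leq m$, consider the one-variable polynomial
\[
R(z) := z^{m-d}(1+z)^{d}, \qquad z \in \mathbb{C},
\]
which has degree exactly $m$. Because $|z|=1$ on $\mathbb{T}$, we have $|R(z)| = |1+z|^{d}$ on the circle, so $\|R\|_{L^p(\mathbb{T})} = \Lambda(p,d)$. On the other hand, the multiplicativity property~\eqref{multiplicativity} together with Lehmer's formula~\eqref{Lehmer} gives $M(R) = M(z^{m-d})\,M((1+z)^d) = 1$. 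Applying~\eqref{Arestov} to $R$ then yields
\[
\Lambda(p,d) \;=\; \|R\|_{L^p(\mathbb{T})} \;\leq\; \Lambda(p,m)\, M(R) \;=\; \Lambda(p,m),
\]
as required.

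Putting the two steps together, $\prod_{j=1}^n \Lambda(p,d_j) \leq \Lambda(p,m)^n$, and inserting this into Theorem~\ref{estimate-mahler} produces the desired estimate $\|P\|_{L^p(\mathbb{T}^n)} \leq \Lambda(p,m)^n\, M(P)$. There is no genuine obstacle here: once the idea of padding $(1+z)^d$ to a degree-$m$ polynomial via multiplication by $z^{m-d}$ is in place, the rest is bookkeeping.
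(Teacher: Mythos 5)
Your proposal is correct and follows essentially the same route as the paper: reduce to Theorem~\ref{estimate-mahler} and verify the monotonicity $\Lambda(p,d)\le\Lambda(p,m)$ for $d\le m$, which the paper obtains by noting that $\Lambda(p,k)$ is the sharp constant in \eqref{Arestov} over the increasing classes of polynomials of degree at most $k$. Your explicit witness $z^{m-d}(1+z)^d$ is just a concrete instance of that same observation (and the padding factor $z^{m-d}$ is not even needed, since \eqref{Arestov} already applies to $(1+z)^d$ viewed as a polynomial of degree at most $m$).
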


\vspace{1.5 mm}

\begin{proof} Fix  $p>0$. Substituting the polynomial $P\equiv 1$
into \eqref{Arestov} yields $1 \leq \Lambda(p, m)$ for each $m\in
\mathbb{N}$. Since $\Lambda(m, p)$ is the least constant in
inequality \eqref{Arestov} and the class of polynomials becomes
larger as $m$ grows, it follows that $\Lambda(p, k_1) \leq
\Lambda(p, k_2)$ provided $k_1 < k_2$. Now if  $P\in
\mathcal{P}(\mathbb{C}^n)$ with $\text{deg}_\infty (P)=m$, then
$d_j := \text{deg}(P_j)\leq m$ for each $1\leq j\leq n$ and so
$\prod_{j=1}^{n} \Lambda (p,d_j) \leq \Lambda(p, m)^n$.  Thus the
required estimate follows from Theorem \ref{estimate-mahler}.
\end{proof}

\vspace{2 mm}

\section{Applications}

In the final section we discuss some applications of our previous
results. The first one is motivated by a~result due to Bourgain
\cite{Bour} (answering affirmatively a~question by Milman), which
states that there are universal constants $t_0>0$ and $c\in (0,
1)$ such that, for every convex set $K\subset \mathbb{R}^n$ of
volume one and every $P\in \mathcal{P}(\mathbb{R}^n)$, the
following distribution inequality holds
\begin{align} \label{Bour1}
\mu_K\{|P| \geq t\,\|f\|_1\} \leq \text{exp}(-t^{c/n}),\quad\,
t\geq t_0,
\end{align}
where $\mu_K$ is the Lebesgue measure on $K$, and $\|f\|_1$ is the
$L^1$-norm of $P$ with respect to $\mu_K$. It is known that such
inequalities  may be rewritten in terms of the Orlicz space
$L_{\psi_{\alpha}}(K)$ generated by the convex function
$\psi_{\alpha}(t)= \text{exp}(t^{\alpha}) - 1$, $t\geq 0$ as
follows:
\begin{align} \label{Bour2}
\|P\|_{L_{\psi_{\alpha}}(K)} \leq C^{n} \|P\|_1, \quad\, P\in
\mathcal{P}(\mathbb{R}^n)\,,
\end{align}
where $\alpha = c/n$  and $C>0$ is some absolute constant.

We recall that for a given convex function $\varphi\colon [0,
\infty) \to [0, \infty)$ with $\varphi^{-1}(\{0\})=\{0\}$ and any
measure space $(\Omega, \Sigma, \mu)$, the Orlicz space
$L_{\varphi}(\Omega)$ is defined to be the space of all complex
functions $f\in L^0(\mu)$ such that
$\int_{\Omega}\varphi(\lambda|f|)\,d\mu <\infty$ for some $\lambda
>0$, and it is equipped with the norm
\[
\|f\|_{L_{\varphi}(\Omega)} :=\inf\bigg\{\lambda>0;\,
\int_{\Omega}\varphi\bigg(\frac{|f|}{\lambda}\bigg)\,d\mu \leq
1\bigg\}\,.
\]

\vspace{1.5 mm}

Now, using an extrapolation trick, we will prove a~variant of
\eqref{Bour2}, but for polynomials on the $n$-torus $\mathbb{T}^n$
instead of $\mathbb{R}^n$.~We will use Corollary
\ref{estimate-mahler}  to deduce the following Khintchine-Kahane
type inequality relating, for polynomials on the $n$-torus,
a~corresponding exponential Orlicz norm and Mahler's measure.

\vspace{2 mm}

\begin{theorem}\label{Orlicz}
For every $P\in \mathcal{P}(\mathbb{C}^n)$ with $m =
{\rm{deg}}_\infty(P)$ we have
\[
\|P\|_{L_{\psi_{1/m}}(\mathbb{T}^n)} \leq (2^n (e-1))^m \,M(P)\,.
\]
\end{theorem}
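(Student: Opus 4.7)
The plan is to combine Corollary \ref{m-polynomial} with a power-series extrapolation of the Orlicz function. First I would expand
$$
\psi_{1/m}(t) = e^{t^{1/m}} - 1 = \sum_{k=1}^{\infty} \frac{t^{k/m}}{k!}\,,
$$
and, using monotone convergence, write for any $\lambda>0$,
$$
\int_{\mathbb{T}^n} \psi_{1/m}\!\left(\frac{|P(z)|}{\lambda}\right) dz = \sum_{k=1}^{\infty} \frac{1}{k!\,\lambda^{k/m}}\, \|P\|_{L^{k/m}(\mathbb{T}^n)}^{k/m}\,.
$$
By the definition of the Luxemburg norm, the task then reduces to choosing $\lambda=(2^n(e-1))^m\,M(P)$ and verifying that the resulting series is at most $1$.

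Next I would estimate each term by applying Corollary \ref{m-polynomial} with exponent $p=k/m$ (note that $m=\text{deg}_\infty(P)$ is independent of $k$), which gives
$$
\|P\|_{L^{k/m}(\mathbb{T}^n)}^{k/m} \leq \Lambda(k/m,m)^{nk/m}\, M(P)^{k/m}\,.
$$
The key cancellation that makes the extrapolation close cleanly is that, directly from the definition \eqref{Arestov-con},
$$
\Lambda(k/m,m)^{k/m} = \int_{\mathbb{T}} |1+z|^k\, dz \leq 2^k\,,
$$
so $\Lambda(k/m,m)^{nk/m} \leq 2^{nk}$ for every $k\geq 1$.

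Substituting these bounds with $\lambda=(2^n(e-1))^m M(P)$, so that $\lambda^{k/m}=(2^n(e-1))^k M(P)^{k/m}$, cancels the $M(P)^{k/m}$ and $2^{nk}$ factors and leaves
$$
\sum_{k=1}^{\infty} \frac{1}{k!\,(e-1)^k} \;=\; e^{1/(e-1)}-1\,.
$$
Since $1/(e-1) < \ln 2$ (numerically, $0.582<0.693$), this quantity is strictly less than $1$, so the chosen $\lambda$ witnesses the Luxemburg inequality and yields the stated bound.

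I do not anticipate a serious obstacle in carrying this out; the only point requiring care is the index-matching that produces $\Lambda(k/m,m)^{k/m}=\int_{\mathbb{T}}|1+z|^k\,dz$, because without this cancellation the $k$-dependence in Arestov's constant would not collapse and one would not get a summable series with the clean factor $(2^n(e-1))^m$.
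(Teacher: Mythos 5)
Your proof is correct and follows essentially the same extrapolation argument as the paper: expand $\psi_{1/m}$ as a power series, apply Corollary \ref{m-polynomial} at $p=k/m$, and sum with $\lambda=(2^n(e-1))^mM(P)$. Your observation that $\Lambda(k/m,m)^{k/m}=\int_{\mathbb{T}}|1+z|^k\,dz\le 2^k$ directly from the definition is a cleaner justification of the key bound $\Lambda(k/m,m)< 2^m$, which the paper instead verifies via the Gamma-function formula.
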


\vspace{1.5 mm}

\begin{proof}
We claim that for each $k\in \mathbb{N}$,
\begin{align} \label{eq1}
\Lambda\Big(\frac{k}{m}, m\Big) <2^{m}\,.
\end{align}
Indeed, recall first that the function $\Gamma$ reaches its only
minimum on $(0, \infty)$ at $x_{\text{min}}\approx 1.4616321451$
(see, e.g., \cite[p.~303]{Pierpont}). In particular,
$x_{\text{min}} < (k+1)/2 $ for each $k\geq 2$. Since $\Gamma$ is
increasing  on $(x_{\text{min}}, \infty)$,  it follows
 that for  each integer
 $k\geq 2$,
\begin{align*}
\Lambda\Big(\frac{k}{m}, m\Big)  = \frac{2^m}{\sqrt{\pi}^{m/k}}
\Big(  \frac{\Gamma((k+1)/2)}{\Gamma((k+2)/2)} \Big)^{m/k} \leq
\frac{2^m}{\sqrt{\pi}^{m/k}} <2^{m}.
\end{align*}
Moreover, for $k=1$ we easily check
\begin{align*}
\Lambda\Big(\frac{1}{m}, m\Big)  = \frac{2^m}{\sqrt{\pi}^{m}}
\Big(\frac{\Gamma(1)}{\Gamma(3/2)} \Big)^{m} = 2^m\,
\Big(\frac{2}{\pi}\Big)^{m} <2^{m}\,.
\end{align*}
which proves \eqref{eq1}. Now fix $P\in
\mathcal{P}(\mathbb{C}^n)$. Combining the above inequality with
Corollary \ref{m-polynomial} (for $p=k/m$) yields,
\[
\int_{\mathbb{T}^n} |P|^{k/m}\,d\lambda_n \leq
\Lambda\Big(\frac{k}{m}, m\Big)^{\frac{nk}{m}}\,M(P)^{\frac{k}{m}}
\leq 2^{ nk  }\,M(P)^{\frac{k}{m}}, \quad\, k\in \mathbb{N}.
\]
Recall that  $\psi_{1/m}(t) = \text{exp}(t^{1/m})-1$ for all
$t\geq 0$. Then, using Taylor's expansion and \eqref{eq1}, we
obtain that
\begin{align*}
\int_{\mathbb{T}^n} \psi_{1/m}\bigg(\frac{|P|}{2^{nm} (e-1)^m
 M(P)}\bigg)\,dz
& = \sum_{k=1}^{\infty} \frac{1}{ k!}\,\int_{\mathbb{T}^n}
\frac{|P|^{k/m}}{2^{kn}(e-1)^kM(P)^{k/m}}\,dz \\
& \leq \frac{1}{e-1} \sum_{k=1}^{\infty} \frac{1}{
k!}\,\int_{\mathbb{T}^n}
\frac{|P|^{k/m}}{2^{kn}M(P)^{k/m}}\,dz \\
& \leq \frac{1}{e-1}\sum_{k=1}^{\infty} \frac{1}{k!} =1 \,.
\end{align*}
Hence, by the definition of the norm in the Orlicz space
$L_{\psi_{1/m}}(\mathbb{T}^n)$, we get
\[
\|P\|_{L_{\psi_{1/m}}(\mathbb{T}^n)}  \leq (2^n (e-1))^m \,M(P)
\]
and this completes the proof.
\end{proof}

\vspace{2 mm}

The final applications are motivated by some interesting results
from the remarkable article \cite{Mahler2}. To explain these
results, following \cite{Mahler2}, for a given polynomial $P \in
\mathcal{P}(\mathbb{C}^n)$, we define
\[
L(P) := \sum_\alpha |c_\alpha(P)| \,\,\, \text{and }\,\,\, H(P) :=
\max_\alpha |c_\alpha(P)| \,.
\]
In \cite{Mahler2} Mahler established a~number of inequalities
connecting $L(P)$, $H(P)$ and $M(P)$, and showed applications to
estimates of
$\|P\|_{L^2(\mathbb{T}^n)}$ in terms of $L(P)$, $H(P)$, or $M(P)$.
A series of papers studies this and related problems. In
particular, the problem of finding norm estimates
\begin{equation}
\label{product} \|PQ\| \geq C \|P\|\,\|Q\|,
\end{equation}
where  $\|\cdot\|$ is some norm on $\mathcal{P}(\mathbb{C}^n)$,
$P$, $Q\in \mathcal{P}(\mathbb{C}^n)$, and $C$ a~constant
depending only on the degrees of $P$, $Q\in
\mathcal{P}(\mathbb{C}^n)$; see, e.g., \cite{BBEM, Duncan,
Mahler2}.

It was proved by Duncan \cite[Theorem 3]{Duncan} that if
$P,Q\in \mathcal{P}(\mathbb{C})$  with $m=
{\rm{deg}}(P)$ and $k= {\rm{deg}}(Q)$, then
\[
\|PQ\|_{L^2(\mathbb{T})} \geq {2m \choose m}^{-1/2} {2k \choose
k}^{-1/2}\,\|P\|_{L^2(\mathbb{T})}\,\|Q\|_{L^2(\mathbb{T})}.
\]
Below we present a~more general multidimensional variant which
estimates the  $L^p$-norms of products of polynomials over the
$n$-torus $\mathbb{T}^n$ also in the quasi-Banach case, i.e.,
$0<p<1$.

\vspace{2 mm}

\begin{proposition} \label{Duncan}
For every  $P,Q\in \mathcal{P}(\mathbb{C}^n)$ with $m=
{\rm{deg}}_\infty(P)$ and $k= {\rm{deg}}_\infty(Q)$, and every
$0<p<\infty$ we have
\begin{align*}
\|PQ\|_{L^p(\mathbb{T}^n)} \geq \big(\Lambda(p, m)\, \Lambda(p,
k
)\big)^{-n} \|P\|_{L^p(\mathbb{T}^n)}\,
\|Q\|_{L^p(\mathbb{T}^n)}\,.
\end{align*}

\vspace{1.5 mm} \noindent In particular, if $P(z)=
\sum_\alpha c_{\alpha}(P) z^{\alpha}$ and $Q(z)=
\sum_\alpha c_{\alpha}(Q) z^{\alpha}$, then
\begin{align*}
\bigg(\sum_\alpha
|c_{\alpha}(PQ)|^2\bigg)^{1/2} \geq {2m \choose m}^{-n/2} {2k
\choose k}^{-n/2}\, \bigg(\sum_\alpha
|c_{\alpha}(P)|^2\bigg)^{1/2}\,\bigg(\sum_\alpha |c_{\alpha}(Q)|^2\bigg)^{1/2}.
\end{align*}
\end{proposition}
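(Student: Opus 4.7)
The plan is to combine the two structural facts about Mahler's measure that are already available to us: its multiplicativity \eqref{multiplicativity}, namely $M(PQ) = M(P)M(Q)$, and the Jensen-type bound $M(R) \leq \|R\|_{L^p(\mathbb{T}^n)}$ valid for every $R \in \mathcal{P}(\mathbb{C}^n)$ and every $p > 0$ (this is immediate from \eqref{limit} via Jensen's inequality $\exp \int \log |R|\,dz \leq (\int |R|^p \, dz)^{1/p}$, or alternatively by monotonicity of $L^p$-norms in $p$ combined with passing to the limit $p \to 0+$).

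First, I would apply Corollary \ref{m-polynomial} separately to $P$ and to $Q$ to get
\begin{equation*}
\|P\|_{L^p(\mathbb{T}^n)} \leq \Lambda(p,m)^n M(P), \qquad \|Q\|_{L^p(\mathbb{T}^n)} \leq \Lambda(p,k)^n M(Q).
\end{equation*}
Multiplying these two estimates and invoking \eqref{multiplicativity} yields
\begin{equation*}
\|P\|_{L^p(\mathbb{T}^n)}\,\|Q\|_{L^p(\mathbb{T}^n)} \leq \bigl(\Lambda(p,m)\Lambda(p,k)\bigr)^n M(PQ).
\end{equation*}
Then the Jensen bound $M(PQ) \leq \|PQ\|_{L^p(\mathbb{T}^n)}$ finishes the first inequality.

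For the ``in particular'' part I would specialise to $p = 2$ and compute $\Lambda(2,m)$ explicitly: expanding $|1+z|^{2m} = (1+z)^m(1+\bar z)^m$ on $\mathbb{T}$ and keeping only the diagonal terms under $\int_{\mathbb{T}} dz$ gives $\Lambda(2,m)^2 = \sum_{j=0}^{m}\binom{m}{j}^2 = \binom{2m}{m}$, and similarly for $k$. Combined with Parseval's identity $\|R\|_{L^2(\mathbb{T}^n)}^2 = \sum_\alpha |c_\alpha(R)|^2$, the first inequality immediately reduces to the claimed coefficient estimate with constants $\binom{2m}{m}^{-n/2}\binom{2k}{k}^{-n/2}$.

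There is really no hard step here, since everything is assembled from results already proved or stated in the paper. The only place that needs a line of care is the Jensen comparison $M(R) \leq \|R\|_{L^p(\mathbb{T}^n)}$, but this is standard and follows directly from the representation \eqref{limit} together with monotonicity of $p \mapsto (\int |R|^p\,dz)^{1/p}$. Everything else is multiplication of the two applications of Corollary \ref{m-polynomial} through the multiplicativity of $M$, and in the specialisation the only computation is the Walsh-type evaluation $\Lambda(2,m)^2 = \binom{2m}{m}$.
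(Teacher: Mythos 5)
Your proposal is correct and follows essentially the same route as the paper: apply Corollary \ref{m-polynomial} to $P$ and $Q$, multiply, use the multiplicativity $M(PQ)=M(P)M(Q)$, and finish with the Jensen bound $M(PQ)\leq \|PQ\|_{L^p(\mathbb{T}^n)}$; the ``in particular'' part via $p=2$, Parseval, and $\Lambda(2,m)=\binom{2m}{m}^{1/2}$ is also exactly what the paper does.
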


\vspace{1.5 mm}

\begin{proof}
Combining Corollary \ref{m-polynomial} with multiplicativity
property \eqref{multiplicativity} of the Mahler measure $M$,
we obtain
\begin{align*}
\|P\|_{L^p(\mathbb{T}^n)} \|Q\|_{L^p(\mathbb{T}^n)} & \leq
\Lambda(p, m)^n M(P)\,\Lambda(p, k)^{n} M(Q) \\
& = \big(\Lambda(p, m)\,\Lambda(p, k)\big)^n M(PQ) \leq
\big(\Lambda(p, m)\,\Lambda(p, k)\big)^n
\|PQ\|_{L^p(\mathbb{T}^n)}.
\end{align*}
To conclude note that $$ \|R\|_{L^2(\mathbb{T}^n)} =
\Big(\sum_{\alpha} |c_{\alpha}(R)|^2\Big)^{1/2}\,,\quad R\in
\mathcal{P}(\mathbb{C}^n)\,,$$ and $\Lambda(2, m)  = {2m \choose
m}^{1/2}$ for each $m\in \mathbb{N}$.
\end{proof}

\vspace{2 mm}

In \cite{Mahler1} Mahler  proved the following univariate
``triangle inequality",
\begin{align*}
\label{madu} M(P + Q) \leq \kappa(m)\,\big(M(P) + M(Q)\big),
\quad\, P,\, Q \in \mathcal{P}(\mathbb{C})
\end{align*}
with the constant $\kappa(m)= 2^{m}$ where $\text{deg}(P) =
\text{deg}(Q)=m$, and observed that it has applications in the
theory of diophantine approximation. Duncan \cite{Duncan} obtained
the above inequality with the smaller constant
\[
\kappa(m) = {2m \choose m}^{1/2} \approx (\pi m)^{-1/4}\,2^m\,.
\]
We refer the reader to the paper \cite{Arestov1} of Arestov which
contains a~refinement of these results for algebraic polynomials
on the unit circle; more precisely, the following two-sided
estimates
\[
\frac{1}{2}r^{m} \leq \kappa(m) \leq \frac{1}{2}R^{m}, \quad\,
r\approx 1,796, \quad\, R=\sqrt[6]{40} \approx 1,8493
\]
are obtained, where $m\geq 6$, $R=\sqrt[6]{40} \approx 1,8493$ and
$r = \text{exp}(2G/\pi)r\approx 1,7916$ with $G=
\sum_{\nu=0}^{\infty} (-1)^{\nu}/(2\nu + 1)^{2}$.

\vspace{2 mm}

We have the following multidimensional variant of Duncan's result.

\vspace{2 mm}

\begin{proposition}
For $P$, $Q \in \mathcal{P}(\mathbb{C}^n)$ with
$m={\rm{deg}}_{\infty}(P) = {\rm{deg}}_{\infty}(Q)$ we have
\[
M(P + Q) \leq {2m \choose m}^{n/2}\big(M(P) + M(Q)\big).
\]
\end{proposition}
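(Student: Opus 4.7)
The plan is to sandwich $M(P+Q)$ between an $L^2$-norm on the top side (via Jensen) and apply the already-established estimate from Corollary \ref{m-polynomial} (with $p=2$) to bring the Mahler measures of $P$ and $Q$ back in. The key numerical input is that $\Lambda(2,m) = \binom{2m}{m}^{1/2}$, which is computed from \eqref{Arestov-con} via the identity $\int_{\mathbb{T}}|1+z|^{2m}\,dz = \binom{2m}{m}$ (this identity is already invoked at the end of the proof of Proposition \ref{Duncan}).

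The first step is to note that since $\|R\|_{L^p(\mathbb{T}^n)}$ is non-decreasing in $p$ and $M(R) = \lim_{p\to 0^+}\|R\|_{L^p(\mathbb{T}^n)}$ (equivalently, by Jensen's inequality applied to $\log$), one has
\[
M(P+Q) \,\leq\, \|P+Q\|_{L^2(\mathbb{T}^n)}.
\]
The second step is the ordinary triangle inequality in $L^2(\mathbb{T}^n)$, which gives
\[
\|P+Q\|_{L^2(\mathbb{T}^n)} \,\leq\, \|P\|_{L^2(\mathbb{T}^n)} + \|Q\|_{L^2(\mathbb{T}^n)}.
\]

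The third step applies Corollary \ref{m-polynomial} with $p=2$ to each summand: since $\deg_\infty(P) = \deg_\infty(Q) = m$, we get $\|P\|_{L^2(\mathbb{T}^n)} \leq \Lambda(2,m)^n M(P)$ and similarly for $Q$. Substituting $\Lambda(2,m) = \binom{2m}{m}^{1/2}$ and chaining the three inequalities yields exactly
\[
M(P+Q) \,\leq\, \binom{2m}{m}^{n/2}\bigl(M(P) + M(Q)\bigr),
\]
as required. There is no real obstacle here: the result follows by simply concatenating Jensen's inequality, the Minkowski inequality in $L^2$, and the previously proved Nikol'skii--Mahler estimate of Corollary \ref{m-polynomial}; the only thing to double-check is the explicit value of $\Lambda(2,m)$, which is a direct $\Gamma$-function computation already used in the proof of Proposition \ref{Duncan}.
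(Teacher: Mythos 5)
Your proof is correct and follows essentially the same route as the paper: Jensen's inequality to bound $M(P+Q)$ by $\|P+Q\|_{L^2(\mathbb{T}^n)}$, the triangle inequality in $L^2$, and then Corollary \ref{m-polynomial} with $p=2$ together with $\Lambda(2,m)=\binom{2m}{m}^{1/2}$. No issues.
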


\vspace{1.5 mm}

\begin{proof} The proof is a direct consequence of Theorem
\ref{m-polynomial}: Since $\Lambda(2, m) = {2m \choose m}^{1/2}$
for each $m\in \mathbb{N}$, Jensen's inequality in combination
with Theorem \ref{m-polynomial} yields
\begin{align*}
M(P + Q) & \leq \|P + Q\|_{L^2(\mathbb{T}^n)} \leq
\|P\|_{L^2(\mathbb{T}^n)} + \|Q\|_{L^p(\mathbb{T}^n)}
 \leq {2m \choose m}^{n/2}\big(M(P) + M(Q)\big).
\end{align*}

\end{proof}

\vspace{2 mm}

We conclude the paper with  an interpolation inequality for
$L^p$-norms of polynomials on $\mathbb{T}^n$ that is interesting
its own; here for a~given measure space $(\Omega, \Sigma, \mu)$,
$E\in \Sigma$ with $\mu(E)>0$, and $0<p<\infty$, we write
$\|f\|_{L^p(E)}:= \big(\int_{E}|f|^p\,d\mu\big)^{1/p}\,,\,\,f\in
L^p(\mu)$.

\vspace{2 mm}

\begin{theorem}
Let $E$ be a~Lebesgue measurable subset of \,$\mathbb{T}^n$ with
$\lambda_{n}(E) \geq \theta>0$. Then the following interpolation
inequality holds for any polynomial $P\in
\mathcal{P}(\mathbb{C}^n)$ with ${\rm{deg}}_{\infty}(P)=m$ and any
$0<p < \infty${\rm{:}}
\[
\|P\|_{L^p(\mathbb{T}^n)} \leq
C(\theta)\,\Lambda(p,m)^{n}\,\big(\|P\|_{L^1(\mathbb{T}^n)}\big)^{1-\theta}\,
\big(\|P\|_{L^1(E)}\big)^{\theta},
\]
where $C(\theta) =
\frac{1}{\theta^{\theta}(1-\theta)^{1-\theta}}\leq 2$ for every
$0<\theta<1$, and $C(1)=1$.
%\item[{\rm(ii)}] $\|P\|_{L_{\psi_{1/m}}(\mathbb{T}^n)} \leq
%2(2^n(e-1))^{m}\, \big(\|P\|_{L^1(\mathbb{T}^n)}\big)^{1-\theta}$
%\big(\|P\|_{L^1(E)}\big)^{\theta}\,.
%\end{itemize}
\end{theorem}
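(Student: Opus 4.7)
The plan is to peel off the exponent $p$ via Corollary~\ref{m-polynomial}, which already delivers
\[
\|P\|_{L^p(\mathbb{T}^n)} \,\leq\, \Lambda(p,m)^n\, M(P),
\]
and then to bound $M(P)$ itself by the geometric mean on the right-hand side. Thus the whole task reduces to proving the $p$-free estimate
\[
M(P)\,\leq\, C(\theta)\,\|P\|_{L^1(\mathbb{T}^n)}^{\,1-\theta}\,\|P\|_{L^1(E)}^{\,\theta}\,,
\]
after which one simply multiplies by $\Lambda(p,m)^n$. No further control of $p$ or of the degree $m$ enters.

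To establish this $M(P)$-bound I would first reduce to the case $\lambda_n(E)=\theta$: since Lebesgue measure on $\mathbb{T}^n$ is atomless one can pick a measurable $E'\subset E$ with $\lambda_n(E')=\theta$, and because $\|P\|_{L^1(E')}\leq\|P\|_{L^1(E)}$ the general statement follows from the equality case. Next, splitting
\[
\log M(P) \,=\, \int_E \log|P|\,dz \,+\, \int_{\mathbb{T}^n\setminus E}\log|P|\,dz
\]
and applying Jensen's inequality (concavity of $\log$) on each piece with respect to the normalized probability measures $\theta^{-1}\,dz|_E$ and $(1-\theta)^{-1}\,dz|_{\mathbb{T}^n\setminus E}$ yields
\[
\log M(P) \,\leq\, \theta\log\frac{\|P\|_{L^1(E)}}{\theta} \,+\, (1-\theta)\log\frac{\|P\|_{L^1(\mathbb{T}^n\setminus E)}}{1-\theta}\,.
\]
Combining this with the trivial estimate $\|P\|_{L^1(\mathbb{T}^n\setminus E)}\leq\|P\|_{L^1(\mathbb{T}^n)}$ and exponentiating delivers precisely the binary entropy factor $C(\theta)=\theta^{-\theta}(1-\theta)^{-(1-\theta)}$.

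The bound $C(\theta)\leq 2$ is then a standard calculus remark: the binary entropy $-\theta\log\theta-(1-\theta)\log(1-\theta)$ attains its maximum $\log 2$ on $[0,1]$ at $\theta=1/2$. The edge case $\theta=1$ forces $\lambda_n(E)=1$, so $\|P\|_{L^1(E)}=\|P\|_{L^1(\mathbb{T}^n)}$ and the inequality collapses to Corollary~\ref{m-polynomial} itself (with the standard convention $0^0=1$ giving $C(1)=1$). No step poses a real obstacle here; the only care needed is in arranging the two Jensen estimates so that the entropy factor appears with the correct sign and in verifying that passing to the subset $E'$ only loses on the right-hand side.
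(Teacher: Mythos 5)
Your proposal is correct and follows essentially the same route as the paper: reduce via Corollary~\ref{m-polynomial} to a $p$-free bound on $M(P)$, split $\int\log|P|$ over $E$ and its complement, and apply Jensen's inequality to each normalized piece to produce the entropy constant $\theta^{-\theta}(1-\theta)^{-(1-\theta)}\le 2$. The only cosmetic difference is that the paper proves the splitting estimate as a standalone lemma for a general probability space with $\mu(A)=\alpha$ and then passes to $\mu(A)\ge\theta$, whereas you reduce to the equality case by shrinking $E$ to a subset of measure exactly $\theta$; both are fine.
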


\vspace{2.0 mm}

\begin{proof} The inequality is immediate consequence of Corollary
\ref{m-polynomial}  in combination with the following lemma which
is surely known to specialists -- for the sake of completeness we
include a~proof of it.
\end{proof}

\vspace{1.5 mm}

\begin{lemma}
Let $(\Omega, \Sigma, \mu)$ be a~probaility measure. If $\log
|f|\in L^1(\mu)$ and $A\in \Sigma$ with $\mu(A)=\alpha$, then
\[
{\rm{exp}} \int_{\Omega} \log |f|\,d\mu  \leq
\frac{1}{\alpha^{\alpha}(1-\alpha)^{1-\alpha}}\,
\bigg(\int_{A}|f|\,d\mu\bigg)^{\alpha}\,\bigg(\int_{\Omega\setminus
A} |f|\,d\mu\bigg)^{1-\alpha}.
\]
In particular, if $\mu(A)\geq \theta>0$, then
\[
{\rm{exp}} \int_{\Omega} \log |f|\,d\mu  \leq
2\,\bigg(\int_{\Omega} |f|\,d\mu\bigg)^{1-\theta}\,\bigg(\int_{A}
|f|\,d\mu\bigg)^{\theta}\,.
\]
\end{lemma}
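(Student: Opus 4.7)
The plan is to decompose the integral $\int_\Omega \log|f|\,d\mu$ according to the partition $\Omega = A \sqcup (\Omega\setminus A)$, apply Jensen's inequality separately on each piece, and then extract the ``in particular'' statement from the first inequality by elementary bookkeeping.

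Set $\alpha = \mu(A) \in (0,1)$. I would first write
\[
\int_\Omega \log|f|\,d\mu = \int_A \log|f|\,d\mu + \int_{\Omega\setminus A} \log|f|\,d\mu.
\]
Since $\frac{1}{\alpha}\mu|_A$ is a probability measure on $A$, Jensen's inequality applied to the concave function $\log$ yields
\[
\frac{1}{\alpha}\int_A \log|f|\,d\mu \leq \log\bigg(\frac{1}{\alpha}\int_A |f|\,d\mu\bigg),
\]
which rearranges to $\int_A \log|f|\,d\mu \leq \alpha \log\int_A|f|\,d\mu - \alpha \log \alpha$. The analogous estimate holds on $\Omega\setminus A$ with $1-\alpha$ in place of $\alpha$. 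Summing the two, exponentiating, and collecting $-\alpha\log\alpha - (1-\alpha)\log(1-\alpha)$ into the prefactor $\alpha^{-\alpha}(1-\alpha)^{-(1-\alpha)}$ gives exactly the first inequality. The boundary case $\alpha = 1$ reduces to a direct application of Jensen on $\Omega$ (with the convention $0^0 = 1$).

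For the ``in particular'' statement, three elementary observations finish the proof. First, the function $h(\alpha) = \alpha^\alpha(1-\alpha)^{1-\alpha}$ on $[0,1]$ attains its unique minimum $h(1/2)=1/2$, so $1/h(\alpha) \leq 2$ uniformly in $\alpha$. Second, $\int_{\Omega\setminus A}|f|\,d\mu \leq \int_\Omega|f|\,d\mu$, so the second factor on the right-hand side of the first inequality can be enlarged to $\int_\Omega|f|\,d\mu$. Third, since $\int_A|f|\,d\mu \leq \int_\Omega|f|\,d\mu$ and $\alpha \geq \theta$, writing
\[
\bigg(\int_A|f|\,d\mu\bigg)^\alpha\bigg(\int_\Omega|f|\,d\mu\bigg)^{1-\alpha} = \bigg(\frac{\int_A|f|\,d\mu}{\int_\Omega|f|\,d\mu}\bigg)^\alpha \int_\Omega|f|\,d\mu
\]
and using that a number in $[0,1]$ raised to a smaller exponent only becomes larger, we may replace the exponents $\alpha, 1-\alpha$ by $\theta, 1-\theta$. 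Combining the three observations yields the claim.

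The argument is essentially textbook, and I expect no genuine obstacle; the only care required is to track the constants coming out of the two Jensen applications and to verify the exponent-swap from $\alpha$ to $\theta$ in the ``in particular'' step, which relies crucially on the monotonicity $\int_A|f|\,d\mu \leq \int_\Omega|f|\,d\mu$ and on $\alpha \geq \theta$.
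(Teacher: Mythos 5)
Your proposal is correct and follows essentially the same route as the paper: split $\Omega$ into $A$ and $\Omega\setminus A$, apply Jensen's inequality to the normalized restricted measures on each piece, and absorb the normalizing factors into the constant $\alpha^{-\alpha}(1-\alpha)^{-(1-\alpha)}\leq 2$. If anything, you are more careful than the paper on the ``in particular'' step, where the passage from the exponents $(\alpha,1-\alpha)$ to $(\theta,1-\theta)$ does indeed require the monotonicity argument you spell out (namely $\int_A|f|\,d\mu\leq\int_\Omega|f|\,d\mu$ and $\alpha\geq\theta$), a point the paper's proof leaves implicit.
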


\vspace{1.5 mm}

\begin{proof}
Recall that if $(\Omega, \Sigma, \nu)$ is a~probability measure
space and $\log|g| \in L^1(\nu)$, then it follows by Jensen's
inequality that
\begin{align*}
\label{J}
\text{exp} \int_{\Omega} \log |g|\,d\nu \leq \int_{\Omega} |g|\,
d\nu.
\end{align*}
With $A'= \Omega\setminus A$ this inequality yields
\begin{align*}
{\rm{exp}} \int_{\Omega} \log |f|\,d\mu & = \bigg(\text{exp}
\int_A \log |f|\,d\mu\bigg)\,\bigg(\text{exp}
\int_{A'} \log |f|\,d\mu\bigg)\\
&= \bigg (\text{exp} \frac{1}{\mu(A)} \int_A \log
|f|\,d\mu\bigg)^{\mu(A)} \bigg(\text{exp} \frac{1}{\mu(A')}
\int_{A'} \log
|f|\,d\mu\Big)^{\mu(A')} \\
& \leq \frac{1}{\theta^{\theta}(1-\theta)^{1-\theta}}
\bigg(\int_{A}|f|\,d\mu\bigg)^{\theta}\,\bigg(\int_{A'}
|f|\,d\mu\bigg)^{1-\theta}.
\end{align*}
To complete the proof it is enough to observe that $x^{x}(1-
x)^{1- x} \geq 1/2$ for every $0< x <1$.
\end{proof}

\vspace{2 mm}

{\bf Acknowledgements.}  We thank S.~Kwapie\'n for his permission
to include his result from Proposition \ref{homo3}. Finally, we
are grateful to D.~Galicer for some clarifying discussions.

\vspace{3 mm}

\vspace {2 mm}

\noindent
Institut f\"ur Mathematik \\
Carl von Ossietzky Universit\"at \\
Postfach 2503 \\
D-26111 Oldenburg, Germany

\vspace{1 mm} \noindent
E-mail: andreas.defant@uni-oldenburg.de \\
\vspace{0.5 mm}

\noindent
Faculty of Mathematics and Computer Science\\
A.~Mickiewicz University\\
and Institute of Mathematics\\
Polish Academy of Sciences (Pozna\'n branch)\\
Umultowska 87\\
61-614 Pozna\'n, Poland

\vspace{1 mm} \noindent
E-mail: mastylo$@$amu.edu.pl \\
\end{document}